\documentclass[reqno,11pt,centertags]{article}
\usepackage{amsmath,amsthm,amscd,amssymb,latexsym,upref}
\date{\today}

\usepackage[T2A,OT1]{fontenc}
\usepackage[russian,english]{babel}
\usepackage[margin=3.5 cm]{geometry}

\usepackage{hyperref}
\hypersetup{
    colorlinks=true, 
    linktoc=all,     
    linkcolor=blue,  
}

\newcommand{\bbD}{{\mathbb{D}}}

\newcommand{\bbT}{{\mathbb{T}}}

\newcommand{\z}{\zeta}

\allowdisplaybreaks \numberwithin{equation}{section}
\newtheorem{theorem}{Theorem}[section]

\newtheorem{lemma}[theorem]{Lemma}

\newtheorem{corollary}[theorem]{Corollary}

\theoremstyle{definition}

\newtheorem{remark}[theorem]{Remark}

\def\be{\begin{equation}}
\def\ee{\end{equation}}
\def\bea{\begin{eqnarray}}
\def\eea{\end{eqnarray}}
\def\bean{\begin{eqnarray*}}
\def\eean{\end{eqnarray*}}
\def\bbm{\begin{bmatrix}}
\def\ebm{\end{bmatrix}}

\def\restr#1{\,\vrule\,\lower1ex\hbox{$#1$}}

\def\a{\alpha}
\def\b{\beta}
\def\d{\delta}
\def\D{\Delta}

\def\g{\gamma}
\def\G{\Gamma}

\def\z{\zeta}

\def\C{{\bf C}}

\title
{Automorphic de Branges - Rovnyak Spaces}
\author{Alexander Kheifets}

\begin{document}

\maketitle

\begin{center}
\textit{Dedicated to the memory of Professor Heinz Langer}
\end{center}
\medskip


\begin{abstract}

A family of automorphic de Branges - Rovnyak
spaces (depending on an arbitrary character $\a$)
is associated to a $\b$-automorphic analytic function $w$
bounded in modulus by $1$. Multiplication by the Green function
defines unitary operators acting between those spaces. 
Function $w$ can be recovered from these operators as the characteristic function.
It is shown that for Nevanlinna-Pick problem this family of unitary operators extends
the family of isometric operators defined by the interpolation data.

\end{abstract}

\section{Automorphic Hardy Spaces}

\subsection{Green Function. Widom. Pommerenke.}

Let $\Gamma$ be a  Fuchsian group acting on the unit disk $\mathbb D$.
A mapping $\a$ from a group $\G$ to the unit circle $\bbT$ is called a character of $\G$ if
$$
\a(\g_1\g_2)=\a(\g_1)\a(\g_2).
$$
We say that function $f$ defined on the unit disk $\mathbb D$ or/and on the unit circle $\mathbb T$
is character automorphic (more specifically $\a$ automorphic) if
$$
f\circ\g=\a(\g) f,\quad\forall \g\in\G.
$$
Assuming that $\G$ is of convergent type, we define
the Green function of $\Gamma$ at $0$ as
$$
g_0(\zeta)=\prod\limits_{\gamma\in\Gamma}\gamma(\zeta)\frac{|\gamma(0)|}{\gamma(0)}.
$$
Group $\G$ is said to be of Widom type \cite{Widom71, Pom} if $g'_0$ is of bounded characteristic. In this case
\begin{theorem}[Widom-Pommerenke]\label{June19-02}
$$
g'_0(\zeta)=\dfrac{\Delta_0(\zeta)}{\psi_0(\zeta)},
$$
where $\Delta_0$ is an inner function 
\footnote{We will always use normalization $\D_0(0)>0$.}
and $\psi_0$ is a an outer function bounded in modulus
by~$1$.
\end{theorem}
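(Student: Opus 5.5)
Since $g'_0$ is of bounded characteristic by the Widom assumption, I will start from the Smirnov--Nevanlinna canonical factorization of $g'_0$ in $\bbD$,
$$
g'_0=\frac{B\,S_1}{S_2}\,F,
$$
where $B$ is the Blaschke product over the zeros of $g'_0$, $S_1$ and $S_2$ are singular inner functions with mutually singular measures, and $F$ is outer. The statement then amounts to two claims:
\begin{itemize}
\item[(a)] there is no singular inner denominator, i.e. $S_2\equiv 1$;
\item[(b)] the outer factor satisfies $|F|\ge 1$ a.e. on $\bbT$, so that $\psi_0:=1/F$ is outer with $|\psi_0|\le 1$.
\end{itemize}
Given (a) and (b), I set $\Delta_0=BS_1$, multiplied by a unimodular constant so that $\Delta_0(0)>0$. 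The zeros of $g'_0$ (the critical points of the Green function) all go into $\Delta_0$.

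For (b), I will use that $g_0$ is an inner function. Indeed, $|g_0|=\prod|\g(\z)|$ and
$$
\log|g_0(\z)|=-G(\z,0)
$$
is minus the Green function of $\G$, which is the Green function of the Riemann surface $\bbD/\G$. This is harmonic off the orbit $\G(0)$, positive, and has zero least harmonic majorant, so $g_0$ is inner with no singular part. Since the product is a Blaschke product, $g_0$ extends analytically across every arc of $\bbT$ lying outside the closure of the orbit, and on the limit set I will work with boundary values a.e. The key inequality is the Julia--Carathéodory angular derivative estimate for inner functions: at a.e.\ point $t\in\bbT$ where $g_0$ has unimodular radial limit,
$$
\liminf_{r\to1}\frac{1-|g_0(rt)|}{1-r}\;\ge\;\frac{1-|g_0(0)|}{1+|g_0(0)|}>0 .
$$
Here $g_0(0)=0$, so the right side equals $1$. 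I expect this to give $|g_0'(t)|\ge 1$ wherever the angular derivative exists. Where it does not exist, the radial limit of $|g'_0|$ is $\infty$ in the sense relevant to $\log|F|$.

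The main obstacle is making this rigorous in a.e.\ form, since $g'_0$ need not have finite angular derivatives on a large set. My first attempt will be to bound the radial limit of $|g_0'|$ below directly by
$$
|g_0'(rt)|\ge \frac{1-|g_0(rt)|^2}{1-r^2}\cdot\frac{1-r^2}{\cdots},
$$
and I am not sure this route works. If it fails, I will use the standard fact that $|g'|\ge1$ a.e.\ in the sense of Fatou limits for any inner $g$ with $g(0)=0$, which follows from Löwner's lemma. Löwner's lemma says such $g$ preserves Lebesgue measure on $\bbT$. Combined with the existence of finite nonzero nontangential limits of $g_0'$ a.e.\ (bounded type gives this), it gives $|g_0'(t)|\ge 1$ a.e. Hence
$$
\log|F|=\log|g_0'|\ge 0 \quad\text{a.e. on } \bbT,
$$
which is (b).

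For (a), I will exclude the singular denominator by a harmonic-majorant argument. Since $g_0$ is bounded, $\log|g_0'|$ on circles is controlled by Cauchy estimates,
$$
|g_0'(z)|\le \frac{1}{1-|z|}.
$$
A singular measure $\mu$ in the denominator would force $\log|g'_0(rt)|\approx P_\mu(rt)$, the Poisson integral of $\mu$, which tends to $+\infty$ faster than $\log\frac1{1-r}$ at $\mu$-a.e.\ point. This contradicts the bound above, so $S_2\equiv1$. With both claims in hand, $\Delta_0=BS_1$ and $\psi_0=1/F$ give the factorization with the required normalization.
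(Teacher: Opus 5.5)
The paper does not prove this theorem. It quotes it from Widom and Pommerenke, so your argument has to stand on its own.

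\textbf{Part (b).} This part is essentially right, and you can close it directly without the Loewner detour. As sketched, that detour would not give a pointwise bound anyway, because the nontangential limit of $g_0'$ is not the derivative of the boundary map along $\mathbb{T}$. Since $g_0'\in N$, it has a finite nontangential limit at a.e.\ $t\in\mathbb{T}$. At such $t$ with $|g_0(t)|=1$ you have $\frac{1-|g_0(rt)|}{1-r}\le\frac{|g_0(t)-g_0(rt)|}{1-r}\to|g_0'(t)|$. So the Julia quotient is finite, and Julia's lemma with $g_0(0)=0$ gives $|g_0'(t)|\ge 1$. Equivalently, at those points $|g_0'(t)|=\sum_{\gamma\in\Gamma}|\gamma'(t)|\ge 1$, where the identity element contributes $1$; this is the boundary form of the identity $g_0'/g_0=\sum_\gamma\gamma'/\gamma$ used in the paper for the theta series. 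The set where the angular derivative fails to exist is therefore null and needs no separate treatment.

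\textbf{Part (a): the genuine gap.} Your claim that $P_\mu(rt)$ grows faster than $\log\frac{1}{1-r}$ at $\mu$-a.e.\ point is false for general singular $\mu$. There are nonzero singular measures with $\mu(I)\le C|I|\log\frac{e}{|I|}$ for every arc $I$ (singular measures of Zygmund class, due to Piranian and Kahane). For these, $P_\mu(z)\le C'\log\frac{e}{1-|z|}$ throughout $\mathbb{D}$.

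Worse, the information you use cannot yield (a) at all. Take such a $\mu$ and $0<\epsilon<1/C'$, and let $f(z)=\int_0^z S_{\epsilon\mu}(\zeta)^{-1}\,d\zeta$. Then $f$ is bounded, yet $f'=1/S_{\epsilon\mu}$ is of bounded characteristic with a nontrivial singular inner denominator. Your approximation $\log|g_0'(rt)|\approx P_\mu(rt)$ also drops $\log|BS_1|$, which can be very negative along a radius.

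So (a) must use the group. Differentiating $g_0\circ\gamma=\nu_0(\gamma)g_0$ gives $|g_0'\circ\gamma|\,|\gamma'|=|g_0'|$. Consequently the singular part $\sigma_s$ of the boundary measure of the least harmonic majorant of $\log|g_0'|$ satisfies $\sigma_s(\gamma E)=\int_E|\gamma'|\,d\sigma_s$, so $P_{\sigma_s}$ is $\Gamma$-invariant.

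This invariance does not see the sign of $\sigma_s$, and the negative (numerator) part genuinely occurs. For the cyclic group generated by $\tau\mapsto\tau+1$ in the half-plane model (base point $i$), one finds $\Delta_0(\zeta)=\exp\bigl(-2\pi\frac{1+\zeta}{1-\zeta}\bigr)$, a singular inner function with an atom at the parabolic fixed point. Ruling out the positive (denominator) part is where the real work of the Widom--Pommerenke theorem lies, and it needs the specific Green-function structure of $g_0$. Your proposal does not supply this step.
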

Note that $g_0$ and $\D_0$ are character automorphic functions. We denote their characters as $\nu_0$
and $\d_0$, respectively.
\if{
We will also consider $g_{\zeta_0}(\zeta)$ the Green function of $\Gamma$ at $\zeta_0$
$$
g_{\zeta_0}(\zeta)=
\prod\limits_{\gamma\in\Gamma}\frac{\gamma(\zeta)-\zeta_0}{1-\gamma(\zeta)\overline{\zeta_0}}
c_{\gamma, \zeta_0}.
$$
For groups of Widom type we also have
$$
g'_{\z_0}(\zeta)=\dfrac{\Delta_{\z_0}(\zeta)}{\psi_{\z_0}(\zeta)},
$$
where $\Delta_{\z_0}$ is an inner function and $\psi_{\z_0}$ is a bounded outer function. We will use notations
$\nu_{\z_0}$ for the character of $g_{\z_0}$ and $\d_{\z_0}$ for the character of $\D_{\z_0}$.
}\fi

\subsection{Automorphic Hardy Spaces. Reproducing Kernels. Direct Cauchy Theorem.}\label{260111-01}

For groups of Widom type all spaces $H^p(\a)$, $1\le p\le\infty$, $\a\in\G^*$, contain nonconstant functions.
This can be seen by means of the Poincar\' e theta series
$$
(P^\alpha f) (t)=\dfrac{\sum\limits_{\gamma\in\Gamma}\overline{\alpha(\gamma)}f(\gamma(t))
|\gamma'(t)|}
{\sum\limits_{\gamma\in\Gamma}|\gamma'(t)|}
$$
$$
=\frac{\sum\limits_{\gamma\in\Gamma}\overline{\alpha(\gamma)}f(\gamma(t))
\dfrac{\gamma'(t)}{\g(t)}}
{\sum\limits_{\gamma\in\Gamma}\dfrac{\gamma'(t)}{\g(t)}}
=\frac{\sum\limits_{\gamma\in\Gamma}\overline{\alpha(\gamma)}f(\gamma(t))
\dfrac{\gamma'(t)}{\g(t)}}
{\dfrac{g'_0(t)}{g_0(t)}}.
$$
$P^\a$ is a contractive projection from $L^p$ onto $L^p(\a)$. For $p=2$, $P^\a$
is also selfadjoint (that is, it is the orthogonal projection from $L^2$ onto $L^2(\a)$). When applied to functions of
the form $\D_0 H^p$ it produces many functions in $H^p(\a)$. However, $P^\a$ is not the orthogonal projection from $H^2$
onto $H^2(\a)$.

By $k^\a_\z(t)$ we denote the orthogonal projection of $\dfrac{1}{1-t\overline\z}$ on $H^2(\a)$. It is the reproducing kernel of $H^2(\a)$ at point $\z$.

We say that the Direct Cauchy Theorem holds for a group of Widom type $\G$
(see \cite{Hasu, SY})
if for every $f\in H^1(\d_0)$ the following equality holds
\be\label{240901-01}
\int\limits_{\mathbb T}\frac{f}{\D_0}L(dt)=\frac{f(0)}{\D_0(0)},
\ee
where $L(dt)$ is the normalized Lebesgue measure on the unit circle $\bbT$. In particular, \eqref{240901-01} implies
that
$$
k^{\d_0}_0=\D_0\overline{\D_0(0)}
$$
is the reproducing kernel at $0$ for $H^2(\d_0)$.

The following orthogonal decomposition of
$L^2(\a)$ is an equivalent form of the Direct Cauchy Theorem property
(see \cite{Hasu, SY})
\be\label{250302-02}
H^2_\perp(\alpha)=L^2(\alpha)\ominus H^2(\alpha)
=\frac{\Delta_0}{g_0}\overline{H^2(\delta_0\overline{\nu_0}\ \overline\alpha)}.
\ee
Let $\widetilde k^\alpha_\zeta\in H^2_\perp(\alpha)$ be such that
$$
\left\langle
\dfrac{\D_0}{g_0}\overline{h}, \widetilde k^\alpha_\zeta
\right\rangle_{H^2_\perp(\alpha)}=\overline{h(\zeta)}.
$$
It is straightforward that
\be\label{June22-01}
\widetilde k^\alpha_\zeta
=\dfrac{\Delta_0}{g_0}\overline{k^{\delta_0 \overline{\nu_0}\ \overline\alpha}_\zeta}
.
\ee
The next lemma is a combination of Lemma 7.2 in \cite{SY}
and a weaker version of Lemma~3.2 in \cite{VY}.
We prove it here for the reader's convenience.
\begin{lemma}\label{250607_01}
Assume that the Direct Cauchy Theorem holds for group $\G$. Then
\be\label{250608_03}
\D_0\dfrac{\overline{k^\alpha_0}}{\sqrt{k^\alpha_0(0)}}
 =
\dfrac{k^{\d_0\overline\a}_0}{\sqrt{k^{\d_0\overline\a}_0(0)}}
\ee
and
\be\label{250608_04}
\D_0(0)
=\sqrt{k^\alpha_0(0)k^{\d_0\overline\a}_0 (0)}.
\ee
\end{lemma}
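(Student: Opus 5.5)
The plan is to identify the function $f:=\D_0\overline{k^\alpha_0}$ on $\bbT$ as a positive multiple of the reproducing kernel $k^{\d_0\overline\a}_0$. Both identities then follow by comparing norms. Since $|\D_0|=1$ a.e. on $\bbT$ and $\D_0$, $k^\a_0$ have characters $\d_0$, $\a$, the function $f$ lies in $L^2(\d_0\overline\a)$. Throughout I take $k^\a_0(0)>0$ and $k^{\d_0\overline\a}_0(0)>0$, so that the square roots in \eqref{250608_03} make sense. This holds because for Widom groups $H^2(\beta)$ contains functions not vanishing at $0$, obtainable, e.g., via $P^\beta$ applied to suitable functions as in Section~\ref{260111-01}.

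\emph{Step 1: $f\in H^2(\d_0\overline\a)$.} By \eqref{250302-02} it suffices to show $f\perp H^2_\perp(\d_0\overline\a)=\frac{\D_0}{g_0}\overline{H^2(\d_0\overline{\nu_0}\,\overline{\d_0\overline\a})}=\frac{\D_0}{g_0}\overline{H^2(\overline{\nu_0}\a)}$. For $h\in H^2(\overline{\nu_0}\a)$ we have
$$
\Big\langle \D_0\overline{k^\a_0},\frac{\D_0}{g_0}\overline h\Big\rangle=\int_\bbT \overline{k^\a_0}\, g_0 h\, L(dt)=\langle g_0h,k^\a_0\rangle_{H^2(\a)}=g_0(0)h(0)=0 .
$$
Here we use $|g_0|=1$ on $\bbT$ and $g_0h\in H^2(\a)$ (character $\nu_0\cdot\overline{\nu_0}\a$), together with $g_0(0)=0$.

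\emph{Step 2: computing the pairing with $H^2(\d_0\overline\a)$.} For $h\in H^2(\d_0\overline\a)$,
$$
\langle h,f\rangle=\int_\bbT \frac{h\,k^\a_0}{\D_0}\,L(dt)=\frac{h(0)k^\a_0(0)}{\D_0(0)},
$$
by the Direct Cauchy Theorem \eqref{240901-01}, since $hk^\a_0\in H^1(\d_0)$. As $k^\a_0(0)$ and $\D_0(0)$ are positive, combining this with Step 1 gives
$$
f=\frac{k^\a_0(0)}{\D_0(0)}\,k^{\d_0\overline\a}_0 .
$$

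\emph{Step 3: comparing norms.} On one side $\|f\|^2=\|k^\a_0\|^2=k^\a_0(0)$, again using $|\D_0|=1$ on $\bbT$. On the other side the right-hand side has squared norm $\big(k^\a_0(0)/\D_0(0)\big)^2k^{\d_0\overline\a}_0(0)$. Equating the two yields $\D_0(0)^2=k^\a_0(0)k^{\d_0\overline\a}_0(0)$, which is \eqref{250608_04} by the normalization $\D_0(0)>0$. Substituting this value of $\D_0(0)$ into the formula for $f$ and dividing by $\sqrt{k^\a_0(0)}$ gives \eqref{250608_03}.

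The only delicate point is Step 1, namely showing that $f$ is analytic and not merely a function in $L^2(\d_0\overline\a)$. This is exactly where the equivalent form \eqref{250302-02} of the Direct Cauchy Theorem, together with the vanishing $g_0(0)=0$, is used.
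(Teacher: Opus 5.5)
Your proof is correct and follows the paper's argument step for step. You show $\D_0\overline{k^\a_0}\perp H^2_\perp(\d_0\overline\a)$ via \eqref{250302-02}, identify it as $\frac{k^\a_0(0)}{\D_0(0)}k^{\d_0\overline\a}_0$ using the Direct Cauchy Theorem, and then compare norms. Two facts that the paper leaves implicit are stated explicitly in your version: the vanishing $g_0(0)=0$, and the positivity of $k^\a_0(0)$, which is needed to divide in the norm comparison.
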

\begin{proof}
First we show that
$\D_0\overline{k^{\a}_0}\perp H^2_\perp(\d_0\overline\alpha)$,
that is,
$\D_0\overline{k^{\a}_0}\in H^2(\d_0\overline\alpha)$. Indeed,
by formula \eqref{250302-02},
$$
H^2_\perp(\d_0\overline\alpha)
=\frac{\Delta_0}{g_0}\overline{H^2(\overline{\nu_0}\ \alpha)}.
$$
Then for every $h\in {H^2(\overline{\nu_0}\ \alpha)}$
$$
\left\langle
\dfrac{\D_0}{g_0}\overline{h}, \D_0\overline{k^\alpha_0}
\right\rangle
=
\left\langle
k^\alpha_0 , g_0 h
\right\rangle
=0.
$$
Next, for every $u\in {H^2(\d_0\ \overline\alpha)}$
$$
\left\langle
u, \D_0\overline{k^\alpha_0}
\right\rangle
=
\int\limits_{\bbT}\dfrac{u(t)k^\alpha_0(t)}{\D_0(t)}L(dt)
=
\dfrac{u(0)k^\alpha_0(0)}{\D_0(0)},
$$
by the Direct Cauchy Theorem property. Therefore (since $\D_0(0)>0$),
\be\label{250608_01}
\D_0\overline{k^\alpha_0}
 =
\dfrac{k^\alpha_0(0)}{\D_0(0)}k^{\d_0\overline\a}_0.
\ee
Taking squares of the norms on the both sides of \eqref{250608_01} we get
$$
k^\alpha_0(0)
 =
\dfrac{k^\alpha_0(0)^2}{\D_0(0)^2}k^{\d_0\overline\a}_0 (0),
$$
that is,
\be\label{250608_02}
\D_0(0)
=
\sqrt{k^\alpha_0(0)k^{\d_0\overline\a}_0 (0)}.
\ee
Substituting \eqref{250608_02} into \eqref{250608_01} we get \eqref{250608_03}.
\end{proof}

\section{Automorphic de Branges-Rovnyak spaces}


Let $L^2$ be the space of square summable functions against the Lebesgue measure
 on the unit circle $\bbT$.
Let $w$ be an analytic, bounded in modulus by $1$ function on $\bbD$ and let
$W(t):=
\begin{bmatrix}
1& w(t) \\
\overline{w(t)} & 1 
\end{bmatrix}
$. 
The space $L^w$ is the range
space $W^{1/2}(L_2\oplus L_2)$ endowed with the range norm. In more
detail: for every element $f$ in $L^w$, there exists a unique $h_f\in
L_2\oplus L_2$ which is orthogonal to ${\rm Ker}\, W(t)$ for almost all
$t\in\mathbb T$ and such that $f=W^{1/2}h_f$. This unique $h_f$ will be denoted
by $h_f:=W^{[-1/2]}f$ and the $L^w$-norm of $f$ is defined as
$$
\|f\|_{L^w}^2:=\|h_{_f}\|_{L_2\oplus L_2}^2=
\int_{\mathbb T}\left\|
\begin{bmatrix}
1& w(t) \\
\overline{w(t)} & 1
\end{bmatrix}
^{[-1/2]}f(t)\right\|^2_{\C^2}L(dt),
$$
where $L(dt)$ stands for the normalized Lebesgue
measure on $\mathbb T$. The classical de Branges~-~Rovnyak space $H^w$
is defined (see \cite{BR}) as a subspace of $L^w$ that consists of functions
$$
f=\begin{bmatrix} f_2 \\ f_1\end{bmatrix},\quad f_2\in H^2_+,\quad f_1\in H^2_-,
$$
where $H^2_+$ and $H^2_-$ are the standard Hardy subspaces of $L^2$.

Assume that $w$ is character automorphic with character $\beta$. Note that then
$$
W\circ\gamma=
\begin{bmatrix}
1 & w\circ\gamma \\
\overline{w\circ\gamma} & 1
\end{bmatrix}
=
\begin{bmatrix}
\beta(\gamma) & 0 \\
0 & 1
\end{bmatrix}
\begin{bmatrix}
1 & w \\
\overline{w} & 1
\end{bmatrix}
\begin{bmatrix}
\overline{\beta(\gamma)} & 0
\\
0 & 1
\end{bmatrix}
=
\begin{bmatrix}
\beta(\gamma) & 0 \\
0 & 1
\end{bmatrix}
W
\begin{bmatrix}
\overline{\beta(\gamma)} & 0
\\
0 & 1
\end{bmatrix}
$$
for every $\g\in\G$.
The same relation holds for $W^{1/2}$. We consider now subspace $L^w(\alpha, \beta)$ of $L^w$
\be\label{250302-01}
\begin{bmatrix} f_2 \\ f_1 \end{bmatrix}\in L^w,\quad f_2\in L^2(\alpha\beta), \quad f_1\in L^2(\alpha),
\ee
where $L^2(\alpha)$ is a subspace of $L^2$ that consists of $\a$ automorphic functions.
To motivate: functions of the form
$$
\begin{bmatrix} f_2 \\ f_1 \end{bmatrix}=
W^{1/2}\begin{bmatrix} h_2 \\ h_1 \end{bmatrix},\quad h_2\in L^2(\alpha\beta), \quad h_1\in L^2(\alpha)
$$
satisfy \eqref{250302-01}. We define $H^w(\alpha, \beta)$ as a subspace of $L^w(\alpha, \beta)$ such that
$$
f_2\in H^2_+(\alpha\beta),\quad f_1\in H^2_\perp(\alpha)=
\dfrac{\D_0}{g_0}\overline{H^2(\overline{\nu_0}\d_0\overline\alpha)}
.
$$
Thus, to every $\b$-automorphic analytic function $w$, bounded in modulus by $1$,
one can associate a family of subspaces $L^w(\a,\b)$ and $H^w(\a,\b)$ that depend on
arbitrary character~$\a$.

\section{Associated Unitary Operators and Unitary Colligations}
Using \eqref{250302-02} one can get the following orthogonal decomposition
ot space $L^w(\a,\b)$
$$
L^w(\a,\b)=
\begin{bmatrix}
1 & w \\
\overline{w} & 1
\end{bmatrix}
\begin{bmatrix}
0
\\
H^2(\a)
\end{bmatrix}
\oplus
H^w(\a, \b)
\oplus
\begin{bmatrix}
1 & w \\
\overline{w} & 1
\end{bmatrix}
\begin{bmatrix}
H^2_{\perp}(\a\b)
\\
0
\end{bmatrix}
$$
$$
=
\begin{bmatrix}
1 & w \\
\overline{w} & 1
\end{bmatrix}
\begin{bmatrix}
0
\\
H^2(\a)
\end{bmatrix}
\oplus
H^w(\a, \b)
\oplus
\begin{bmatrix}
1 & w \\
\overline{w} & 1
\end{bmatrix}
\begin{bmatrix}
\dfrac{\D_0}{g_0}\overline{H^2(\overline{\nu_0}\d_0\overline{\alpha\b})}
\\
0
\end{bmatrix}.
$$
The operator of multiplication by $\overline{g_0}$ maps unitarily $L^w(\a,\b)$
onto $L^w(\overline{\nu_0}\a,\b)$.
Thus we have a family of unitary operators acting on the family of spaces $L^w(\a,\b)$.
Since any unitary operator preserves orthgonality, the
multiplication by $\overline{g_0}$  also maps unitarily
$$
\left\{
\dfrac{1}{\sqrt{k^\a_0(0)}}
\begin{bmatrix}
1 & w \\
\overline{w} & 1
\end{bmatrix}
\begin{bmatrix}
0
\\
k^\a_0
\end{bmatrix}
\right\}
\oplus
H^w(\a, \b)
$$
onto
$$
H^w(\overline{\nu_0}\a, \b)
\oplus
\left\{
\dfrac{1}{\sqrt{k^{\d_0\overline{\alpha\b}}_0(0)}}
\begin{bmatrix}
1 & w \\
\overline{w} & 1
\end{bmatrix}
\begin{bmatrix}
\dfrac{\D_0}{g_0}\overline{k^{\d_0\overline{\alpha\b}}_0}
\\
0
\end{bmatrix}
\right\}
$$
\be\label{241225-01}
=
H^w(\overline{\nu_0}\a, \b)
\oplus
\left\{
\dfrac{1}{\sqrt{k^{\alpha\b}_0(0)}}
\begin{bmatrix}
1 & w \\
\overline{w} & 1
\end{bmatrix}
\begin{bmatrix}
\overline{g_0}k^{\alpha\b}_0
\\
0
\end{bmatrix}
\right\}
.
\ee
The latter equality in \eqref{241225-01} 
is due to Lemma \ref{250607_01}.
Thus we get a family of unitary colligations. 
\begin{remark}
In the next section we show how to compute $w$ from the family of unitary colligations
\eqref{241225-01}, like the characteristic function. However, the author does not know what abstract object is modeled by
this family of colligations.
\end{remark}

\section{$w$ as the Characteristic Function.}

Since $\overline{g_0}$ maps 
$$
H^w(\overline{\nu_0^{n}}\a, \b)
\oplus
\left\{
\dfrac{1}{\sqrt{k^{\overline{\nu_0^{n}}\a}_0(0)}}
\begin{bmatrix}
1 & w \\
\overline{w} & 1
\end{bmatrix}
\begin{bmatrix}
0
\\
k^{\overline{\nu_0^{n}}\a}_0
\end{bmatrix}
\right\}
$$
unitarily onto
$$
H^w(\overline{\nu^{n+1}_0}\a, \b)
\oplus
\left\{
\dfrac{1}{\sqrt{k^{\overline{\nu^n_0}{\alpha\b}}_0(0)}}
\begin{bmatrix}
1 & w \\
\overline{w} & 1
\end{bmatrix}
\begin{bmatrix}
\overline{g_0} k^{\overline{\nu^n_0}{\alpha\b}}_0
\\
0
\end{bmatrix}
\right\}
,
$$
we consider this dynamics
\be\label{260225_03}
\overline{g_0}
\left(
h_n
\oplus
e_{1.n}
\right)
=
h_{n+1}
\oplus
e_{2,n},\quad n\ge 0,
\ee
where $h_n\in H^w(\overline{\nu_0^{n}}\a, \b)$,
\be\label{260225_04}
e_{1,n}:=
\dfrac{c_{1,n}}{\sqrt{k^{\overline{\nu_0^{n}}\a}_0(0)}}
\begin{bmatrix}
1 & w \\
\overline{w} & 1
\end{bmatrix}
\begin{bmatrix}
0
\\
k^{\overline{\nu_0^{n}}\a}_0
\end{bmatrix}
\ee
and
\be\label{260225_05}
e_{2,n}:=
\dfrac{c_{2,n}}{\sqrt{k^{\overline{\nu^n_0}{\alpha\b}}_0(0)}}
\begin{bmatrix}
1 & w \\
\overline{w} & 1
\end{bmatrix}
\begin{bmatrix}
\overline{g_0} k^{\overline{\nu^n_0}{\alpha\b}}_0
\\
0
\end{bmatrix}.
\ee
Note that $h_n$, $e_{1,n}$ and $e_{2,n}$ belong to different spaces for different $n$.
\begin{theorem}
Let $\a$ be an arbitrary character of $\G$.
Let $h_0=0$ and $c_{1,n}$ be an arbitrary $\ell^2$ input sequence.
Let $c_{2,n}$ be the corresponding output sequence from dynamics \eqref{260225_03}-\eqref{260225_05}.
Let
\be\label{260225_01}
\widetilde c_1=
\sum\limits_{n=0}^\infty
c_{1,n}
g^n_0\dfrac{k^{\overline{\nu_0^{n}}\a}_0}{\sqrt{k^{\overline{\nu_0^{n}}\a}_0(0)}}
\ee
and
\be\label{260225_02}
\widetilde c_2=
\sum\limits_{n=0}^\infty
c_{2,n}
g^n_0\dfrac{k^{\overline{\nu_0^{n}}\a\b}_0}{\sqrt{k^{\overline{\nu_0^{n}}\a\b}_0(0)}}
\ee
be the associated Fourier series $($functions in $H^2(\a)$ and in $H^2(\a\b)$, respectively$)$. Then
\be\label{260224_01}
\widetilde c_2=w \widetilde c_1.
\ee
\end{theorem}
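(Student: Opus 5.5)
We iterate the dynamics \eqref{260225_03}--\eqref{260225_05} and then compare first and second components of the resulting vectors in $L^w$.

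Since $h_0=0$, induction on $N$ gives
$$
\overline{g_0}^{\,N}\sum_{n=0}^{N-1}\overline{g_0}^{\,-n}e_{1,n}
= h_N\oplus\sum_{n=0}^{N-1}\overline{g_0}^{\,N-1-n}e_{2,n}.
$$
Because $|g_0|=1$ on $\bbT$, we have $\overline{g_0}^{\,-n}=g_0^n$, and multiplying by $g_0^N$ yields
$$
\sum_{n=0}^{N-1}g_0^n e_{1,n}=g_0^N h_N+\sum_{n=0}^{N-1}g_0^{n+1}e_{2,n}.
$$
Write $K_n=k^{\overline{\nu_0^n}\a}_0/\sqrt{k^{\overline{\nu_0^n}\a}_0(0)}$ and $M_n=k^{\overline{\nu_0^n}\a\b}_0/\sqrt{k^{\overline{\nu_0^n}\a\b}_0(0)}$. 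Then
$$
g_0^ne_{1,n}=c_{1,n}\begin{bmatrix} w g_0^nK_n\\ g_0^nK_n\end{bmatrix},\qquad g_0^{n+1}e_{2,n}=c_{2,n}\begin{bmatrix} g_0^nM_n\\ \overline{w}g_0^nM_n\end{bmatrix},
$$
using $g_0\overline{g_0}=1$. Summing, with $S_1^N$ and $S_2^N$ the partial sums of \eqref{260225_01} and \eqref{260225_02},
$$
\begin{bmatrix} wS_1^N\\ S_1^N\end{bmatrix}=g_0^Nh_N+\begin{bmatrix} S_2^N\\ \overline{w}S_2^N\end{bmatrix}.
$$
Hence $g_0^Nh_N=\begin{bmatrix} wS_1^N-S_2^N\\ S_1^N-\overline{w}S_2^N\end{bmatrix}$.

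Next we pass to the limit. The families $\{g_0^nK_n\}$ and $\{g_0^nM_n\}$ are orthonormal in $H^2(\a)$ and $H^2(\a\b)$. Indeed, $g_0^nK_n$ is the normalized kernel at $0$ of $g_0^nH^2(\overline{\nu_0^n}\a)$, and these subspaces are nested, with $g_0^{n+1}H^2(\overline{\nu_0^{n+1}}\a)$ being exactly the functions in $g_0^nH^2(\overline{\nu_0^n}\a)$ that vanish at the (simple) zeros of $g_0$. Consequently $S_1^N\to\widetilde c_1$ in $H^2(\a)$ since $(c_{1,n})\in\ell^2$. By unitarity of the dynamics, $\sum|c_{2,n}|^2\le\sum|c_{1,n}|^2$, so $S_2^N\to\widetilde c_2$ as well.

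It remains to show that the first component of $g_0^Nh_N$ tends to zero. That component is $g_0^N(h_N)_2$ with $(h_N)_2\in H^2(\overline{\nu_0^N}\a\b)$, so it lies in $g_0^NH^2(\overline{\nu_0^N}\a\b)$, a decreasing sequence of subspaces of $H^2(\a\b)$. On the other hand it equals $wS_1^N-S_2^N$, which converges in $L^2$ to $w\widetilde c_1-\widetilde c_2$. Therefore this limit lies in $\bigcap_N g_0^NH^2(\overline{\nu_0^N}\a\b)$. Every function in that intersection vanishes to all orders at $0$, since $g_0$ vanishes at $0$, and is therefore zero. This gives \eqref{260224_01}.

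The main obstacle is this last step, together with the orthonormality claim. The key facts are that $(h_N)_2$ is analytic, because $h_N\in H^w$ has first coordinate in $H^2_+$, and that membership in the closed subspaces $g_0^NH^2(\cdot)$ passes to limits. To justify closedness, note that multiplication by $g_0^N$ is an isometry, so each range is closed. The limit then lies in every $g_0^NH^2$ because those subspaces are nested: for $M\ge N$, the element $g_0^M(h_M)_2$ belongs to $g_0^NH^2$. We never need to control the $L^w$-norm of $h_N$; the coordinatewise $L^2$ convergence above suffices.
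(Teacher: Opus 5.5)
Your proof is correct, and it takes a different route from the paper's.

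\textbf{The paper's route.} The paper works one unit input at a time.
\begin{enumerate}
\item For $c_{1,0}=1$ it computes the states $h_n$ explicitly by induction.
\item It identifies each output $c_{2,n}$ with the coefficient $A^{\alpha}_n$ of $wk^{\alpha}_0/\sqrt{k^{\alpha}_0(0)}$ in the orthonormal system $\{g_0^nM_n\}$ of $H^2(\alpha\beta)$.
\item It uses that this system is a complete basis, taken from the literature, to sum the series to $wk^\alpha_0/\sqrt{k^\alpha_0(0)}$.
\item It repeats the computation for a unit input at time $m$ by replacing $\alpha$ with $\overline{\nu_0^m}\alpha$, and finishes by linearity.
\end{enumerate}

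\textbf{Your route.} You telescope the dynamics once, for an arbitrary $\ell^2$ input. This gives the exact identity $g_0^Nh_N=\bigl(wS_1^N-S_2^N,\ S_1^N-\overline{w}S_2^N\bigr)$. You then dispose of the remainder using only two facts: the first coordinate of $h_N$ is analytic, and $\bigcap_N g_0^NH^2(\overline{\nu_0^N}\alpha\beta)=\{0\}$.

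\textbf{What each approach buys.}
\begin{itemize}
\item Your route handles all $\ell^2$ inputs at once. The paper's final appeal to linearity tacitly needs the boundedness of $c_1\mapsto c_2$, which comes from the same energy balance you use.
\item Your route replaces the cited completeness of the standard basis by the elementary infinite-order-vanishing argument. That argument is precisely what the completeness amounts to.
\item The paper's route, in exchange, produces explicit formulas for the states $h_n$. It also exhibits the input--output map in these bases as the lower-triangular matrix $\bigl(A^{\overline{\nu_0^m}\alpha}_{n-m}\bigr)$ of multiplication by $w$. In your version this identification appears only afterwards, via $c_{2,n}=\langle w\widetilde c_1,g_0^nM_n\rangle$.
\end{itemize}

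\textbf{A minor point of wording.} For $n\ge 1$ every element of $g_0^nH^2(\overline{\nu_0^n}\alpha)$ vanishes at $0$, so $g_0^nK_n$ is not literally a kernel at $0$ of that space. Orthonormality is most quickly checked directly: for $m>n$, $\langle g_0^mK_m,g_0^nK_n\rangle=(g_0^{m-n}K_m)(0)/\sqrt{k^{\overline{\nu_0^n}\alpha}_0(0)}=0$, using $|g_0|=1$ on $\mathbb{T}$ and $g_0(0)=0$.
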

\begin{proof}
We start with a special case when $c_{1,0}=1$ and $c_{1,n}=0$ for
$n\ge 1$. $h_0=0$, by assumption. Then for $n=0$ we have
$$
\overline{g_0}
\left(
0
\oplus
\dfrac{1}{\sqrt{k^{\a}_0(0)}}
\begin{bmatrix}
1 & w \\
\overline{w} & 1
\end{bmatrix}
\begin{bmatrix}
0
\\
k^{\a}_0
\end{bmatrix}
\right)
$$
$$
=
h_{1}
\oplus
\dfrac{c_{2,0}}{\sqrt{k^{{\alpha\b}}_0(0)}}
\begin{bmatrix}
1 & w \\
\overline{w} & 1
\end{bmatrix}
\begin{bmatrix}
\overline{g_0} k^{{\alpha\b}}_0
\\
0
\end{bmatrix}
,
$$
where
$$
c_{2,0}=
\left\langle
\dfrac{\overline{g_0}}{\sqrt{k^\a_0(0)}}
\begin{bmatrix}
1 & w \\
\overline{w} & 1
\end{bmatrix}
\begin{bmatrix}
0
\\
k^\a_0
\end{bmatrix}
,
\dfrac{1}{\sqrt{k^{{\alpha\b}}_0(0)}}
\begin{bmatrix}
1 & w \\
\overline{w} & 1
\end{bmatrix}
\begin{bmatrix}
\overline{g_0} k^{{\alpha\b}}_0
\\
0
\end{bmatrix}
\right\rangle
_{L^w(\overline{\nu_0}\a,\b)}
$$
$$
=
\left\langle
\dfrac{wk^\a_0}{\sqrt{k^\a_0(0)}}
,
\dfrac{k^{\alpha\b}_0}{\sqrt{k^{\alpha\b}_0(0)}}
\right\rangle
_{H^2(\a\b)}
=A^\a_0,
$$
where we will use this notations for the expansion of $\dfrac{wk^\a_0}{\sqrt{k^\a_0(0)}}$ with respect to the
standard orthonormal basis in $H^2(\a\b)$ (see \cite{SY})
\be\label{260117_01}
\dfrac{wk^\a_0}{\sqrt{k^\a_0(0)}}
=
\sum\limits_{n=0}^\infty
A^\a_n
g^n_0\dfrac{k^{\overline{\nu_0^{n}}\a\b}_0}{\sqrt{k^{\overline{\nu_0^{n}}\a\b}_0(0)}}.
\ee
Thus, we get
$$
e_{2,0}=
A^\a_0
\overline{g_0}
\begin{bmatrix}
1 & w \\
\overline{w} & 1
\end{bmatrix}
\begin{bmatrix}
\dfrac{k^{{\alpha\b}}_0}{\sqrt{k^{{\alpha\b}}_0(0)}}
\\ \\
0
\end{bmatrix}
$$
and
$$
h_1=
\overline{g_0}
\begin{bmatrix}
1 & w \\
\overline{w} & 1
\end{bmatrix}
\begin{bmatrix}
-A^\a_0\dfrac{k^{{\alpha\b}}_0}{\sqrt{k^{{\alpha\b}}_0(0)}}
\\ \\
\dfrac{k^\a_0}{\sqrt{k^\a_0(0)}}
\end{bmatrix}.
$$
Next, for $n=1$, we have
$$
\overline{g_0}
\left(
h_1
\oplus
0
\right)
$$
$$
=
h_{2}
\oplus
\dfrac{c_{2,1}}{\sqrt{k^{\overline{\nu_0}\alpha\b}_0(0)}}
\begin{bmatrix}
1 & w \\
\overline{w} & 1
\end{bmatrix}
\begin{bmatrix}
\overline{g_0} k^{\overline{\nu_0}\alpha\b}_0
\\
0
\end{bmatrix}
,
$$
where
$$
c_{2,1}=
\left\langle
\overline{g_0}h_1
,
\dfrac{1}{\sqrt{k^{\overline{\nu_0}\alpha\b}_0(0)}}
\begin{bmatrix}
1 & w \\
\overline{w} & 1
\end{bmatrix}
\begin{bmatrix}
\overline{g_0} k^{\overline{\nu_0}\alpha\b}_0
\\
0
\end{bmatrix}
\right\rangle
$$
$$
=
\left\langle
\overline{g_0}^2
\begin{bmatrix}
1 & w \\
\overline{w} & 1
\end{bmatrix}
\begin{bmatrix}
-A^\a_0\dfrac{k^{{\alpha\b}}_0}{\sqrt{k^{{\alpha\b}}_0(0)}}
\\ \\
\dfrac{k^\a_0}{\sqrt{k^\a_0(0)}}
\end{bmatrix}
,
\overline{g_0}
\begin{bmatrix}
1 & w \\
\overline{w} & 1
\end{bmatrix}
\begin{bmatrix}
\dfrac{k^{\overline{\nu_0}\alpha\b}_0}{\sqrt{k^{\overline{\nu_0}\alpha\b}_0(0)}}
\\ \\
0
\end{bmatrix}
\right\rangle
_{L^w(\overline{\nu_0}\a,\b)}
$$
$$
=
\left\langle
\left(
\dfrac{wk^\a_0}{\sqrt{k^\a_0(0)}}
-A^\a_0\dfrac{k^{{\alpha\b}}_0}{\sqrt{k^{{\alpha\b}}_0(0)}}
\right)
,
g_0
\dfrac{k^{\overline{\nu_0}\alpha\b}_0}{\sqrt{k^{\overline{\nu_0}\alpha\b}_0(0)}}
\right\rangle
_{H^2(\a\b)}
=A^\a_1.
$$
Thus, we get
$$
e_{2,1}=
A^\a_1
\overline{g_0}
\begin{bmatrix}
1 & w \\
\overline{w} & 1
\end{bmatrix}
\begin{bmatrix}
\dfrac{k^{\overline{\nu_0}\alpha\b}_0}{\sqrt{k^{\overline{\nu_0}\alpha\b}_0(0)}}
\\ \\
0
\end{bmatrix}
$$
and
$$
h_2=
\overline{g_0}^2
\begin{bmatrix}
1 & w \\
\overline{w} & 1
\end{bmatrix}
\begin{bmatrix}
-A^\a_0\dfrac{k^{{\alpha\b}}_0}{\sqrt{k^{{\alpha\b}}_0(0)}}
-A^\a_1 g_0\dfrac{k^{\overline{\nu_0}\alpha\b}_0}{\sqrt{k^{\overline{\nu_0}\alpha\b}_0(0)}}
\\ \\
\dfrac{k^\a_0}{\sqrt{k^\a_0(0)}}
\end{bmatrix}.
$$
Proceeding by induction, we get
$$
c_{2,n}=A^\a_n,
$$
$$
e_{2,n}=
A^\a_n
\overline{g_0}
\begin{bmatrix}
1 & w \\
\overline{w} & 1
\end{bmatrix}
\begin{bmatrix}
\dfrac{k^{\overline{\nu_0}^n\alpha\b}_0}{\sqrt{k^{\overline{\nu_0}^n\alpha\b}_0(0)}}
\\ \\
0
\end{bmatrix}
$$
and (for $n\ge 1$)
$$
h_n=
\overline{g_0}^n
\begin{bmatrix}
1 & w \\
\overline{w} & 1
\end{bmatrix}
\begin{bmatrix}
-A^\a_0\dfrac{k^{{\alpha\b}}_0}{\sqrt{k^{{\alpha\b}}_0(0)}}
-A^\a_1 g_0\dfrac{k^{\overline{\nu_0}\alpha\b}_0}{\sqrt{k^{\overline{\nu_0}\alpha\b}_0(0)}}
-\ldots
-A^\a_{n-1} g_0^{n-1}\dfrac{k^{\overline{\nu_0}^{n-1}\alpha\b}_0}{\sqrt{k^{\overline{\nu_0}^{n-1}\alpha\b}_0(0)}}
\\ \\
\dfrac{k^\a_0}{\sqrt{k^\a_0(0)}}
\end{bmatrix}.
$$
Therefore, the Fourier series \eqref{260225_01} and \eqref{260225_02} will be
$$
\widetilde c_1=
\sum\limits_{n=0}^\infty
c_{1,n}
g^n_0\dfrac{k^{\overline{\nu_0^{n}}\a}_0}{\sqrt{k^{\overline{\nu_0^{n}}\a}_0(0)}}
=
\dfrac{k^\a_0}{\sqrt{k^\a_0(0)}}.
$$
and
$$
\widetilde c_2=
\sum\limits_{n=0}^\infty
c_{2,n}
g^n_0\dfrac{k^{\overline{\nu_0^{n}}\a\b}_0}{\sqrt{k^{\overline{\nu_0^{n}}\a\b}_0(0)}}
=
\sum\limits_{n=0}^\infty
A^\a_n
g^n_0\dfrac{k^{\overline{\nu_0^{n}}\a\b}_0}{\sqrt{k^{\overline{\nu_0^{n}}\a\b}_0(0)}}
=
\dfrac{wk^\a_0}{\sqrt{k^\a_0(0)}}.
$$
Hence \eqref{260224_01} holds for the input sequence $c_{1,0}=1$, $c_{1,n}=0$ for $n\ge 1$.

Let now $c_{1,1}=1$ and $c_{1,n}=0$ for all other indices $n$. Then $h_1=0$, $c_{2,0}=0$ and
similar computation to the above starts with $c_{1,1}=1$ and $h_1=0\in H^w(\overline{\nu_0}\a, \b)$.
Therefore,
$$
c_{2,n}=A^{\overline{\nu_0}\a}_{n-1}, \quad n\ge 1 .
$$
Hence in this case
$$
\widetilde c_1=
\sum\limits_{n=0}^\infty
c_{1,n}
g^n_0\dfrac{k^{\overline{\nu_0^{n}}\a}_0}{\sqrt{k^{\overline{\nu_0^{n}}\a}_0(0)}}
=
g_0\dfrac{k^{\overline{\nu_0}\a}_0}{\sqrt{k^{\overline{\nu_0}\a}_0(0)}}.
$$
and
$$
\widetilde c_2
=
\sum\limits_{n=0}^\infty
c_{2,n}
g^n_0\dfrac{k^{\overline{\nu_0^{n}}\a\b}_0}{\sqrt{k^{\overline{\nu_0^{n}}\a\b}_0(0)}}
$$
$$
=
\sum\limits_{n=1}^\infty
A^{\overline{\nu_0}\a}_{n-1}
g^n_0\dfrac{k^{\overline{\nu_0^{n}}\a\b}_0}{\sqrt{k^{\overline{\nu_0^{n}}\a\b}_0(0)}}
=
\sum\limits_{n=0}^\infty
A^{\overline{\nu_0}\a}_{n}
g^{n+1}_0\dfrac{k^{\overline{\nu_0^{n+1}}\a\b}_0}{\sqrt{k^{\overline{\nu_0^{n+1}}\a\b}_0(0)}}
$$
$$
=
g_0\sum\limits_{n=0}^\infty
A^{\overline{\nu_0}\a}_{n}
g^{n}_0\dfrac{k^{\overline{\nu_0^{n+1}}\a\b}_0}{\sqrt{k^{\overline{\nu_0^{n+1}}\a\b}_0(0)}}
=
g_0
\dfrac{wk^{\overline{\nu_0}\a}_0}{\sqrt{k^{\overline{\nu_0}\a}_0(0)}}.
$$
Therefore, \eqref{260224_01} holds for this $c_1$ as well. Similar computation shows that
\eqref{260224_01} holds for every standard basic vector $c_1$ in $\ell^2$. Finally, due to the linearity of
dynamics \eqref{260225_03}-\eqref{260225_05},
\eqref{260224_01} holds for every $c_1\in\ell^2$.
\end{proof}

\section{Reproducing Kernels}
In what follows we assume that $w$ is automorphic with character $\beta$.
\begin{lemma}\label{June21-10}
The following functions
\be
K_{\zeta}^{\alpha, \beta}(t)=\left[\begin{array}{c} K_{\zeta,+}^{\alpha, \beta}(t) \\
K_{\zeta,-}^{\alpha, \beta}(t) \end{array}\right]
=\left[\begin{array}{cc} 1& w(t) \\
\overline{w(t)} & 1\end{array}\right]\left[\begin{array}{c} k^{\alpha\beta}_\zeta(t) \\
-\overline{w(\zeta)} k^\alpha_\zeta(t) \end{array}\right]
\label{12.8}
\ee
and
\be
\widetilde{K}_{\zeta}^{\alpha, \beta}(t)=
\left[\begin{array}{c} \widetilde{K}_{\zeta,+}^{\alpha, \beta}(t) \\
\widetilde{K}_{\zeta,-}^{\alpha, \beta}(t) \end{array}\right]
=\left[\begin{array}{cc} 1& w(t) \\
\overline{w(t)} & 1\end{array}\right]\left[\begin{array}{c} -w(\zeta)\widetilde k^{\alpha\beta}_\zeta(t) \\
\widetilde k^\alpha_\zeta(t) \end{array}\right]   \label{12.9}
\ee
are the reproducing kernels in $H^w(\alpha, \beta)$
for the first component at $\zeta$ and for the second component times
$\dfrac{g_0}{\D_0}$ at $\zeta$ $($which is a conjugate analytic function$)$, respectively.
Where $k^\alpha_\zeta$ and $\widetilde k^\alpha_\zeta$ are defined in Section \ref{260111-01}.
\end{lemma}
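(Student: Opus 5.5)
The plan follows the classical de Branges–Rovnyak computation, carried out in the automorphic setting. The key tool is the inner product identity in $L^w$. If $f=W^{1/2}h_f$ and $g=Wu$ with $u\in L^2\oplus L^2$, then
$$
\langle f, g\rangle_{L^w}=\langle h_f, W^{1/2}u\rangle_{L^2\oplus L^2}=\langle f,u\rangle_{L^2\oplus L^2}.
$$
Here we use that $W^{1/2}u$ differs from its projection onto $(\mathrm{Ker}\,W)^\perp$ only by an element orthogonal to $h_f$. So whenever the kernel candidate has the form $W u$, the $L^w$ pairing reduces to a plain $L^2$ pairing of $f$ against $u$.

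\textbf{Membership.} The first step is to check that $K^{\a,\b}_\z$ and $\widetilde K^{\a,\b}_\z$ lie in $H^w(\a,\b)$. Since each has the form $W u$ with $u\in L^2(\a\b)\oplus L^2(\a)$, the motivating remark after \eqref{250302-01} gives membership in $L^w(\a,\b)$.

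For $K^{\a,\b}_\z$ we must show two things.
\begin{itemize}
\item The first component $k^{\a\b}_\z - w\overline{w(\z)}k^\a_\z$ lies in $H^2(\a\b)$. This is clear, since $wk^\a_\z\in H^2(\a\b)$ because $w$ is bounded, analytic and $\b$-automorphic.
\item The second component $\overline{w}k^{\a\b}_\z-\overline{w(\z)}k^\a_\z$ lies in $H^2_\perp(\a)$, i.e.\ is orthogonal to $H^2(\a)$. Pairing with $h\in H^2(\a)$ gives $\langle wh, k^{\a\b}_\z\rangle - w(\z)\langle h,k^\a_\z\rangle = w(\z)h(\z)-w(\z)h(\z)=0$.
\end{itemize}

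For $\widetilde K^{\a,\b}_\z$ the argument is symmetric.
\begin{itemize}
\item The second component $-w(\z)\overline w\widetilde k^{\a\b}_\z+\widetilde k^\a_\z$ lies in $H^2_\perp(\a)$. This holds because multiplication by $\overline w$ maps $H^2_\perp(\a\b)$ into $H^2_\perp(\a)$, being the adjoint of multiplication by $w$ from $H^2(\a)$ to $H^2(\a\b)$.
\item The first component $-w(\z)\widetilde k^{\a\b}_\z+w\widetilde k^\a_\z$ is orthogonal to $H^2_\perp(\a\b)$. Test against $\frac{\D_0}{g_0}\overline{h}$ with $h\in H^2(\overline{\nu_0}\d_0\overline{\a\b})$, using \eqref{250302-02}, the defining property of $\widetilde k$, and \eqref{June22-01}. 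Then $\langle \frac{\D_0}{g_0}\overline h, w\widetilde k^\a_\z\rangle=\langle\frac{\D_0}{g_0}\overline{wh},\widetilde k^\a_\z\rangle=\overline{w(\z)h(\z)}$, because $wh\in H^2(\overline{\nu_0}\d_0\overline\a)$. This cancels against $\overline{w(\z)}\,\overline{h(\z)}$ from the other term.
\end{itemize}

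\textbf{Reproducing property.} Next, for $f=\begin{bmatrix} f_2\\ f_1\end{bmatrix}\in H^w(\a,\b)$, the identity above gives
$$
\langle f,K^{\a,\b}_\z\rangle_{L^w}=\langle f_2,k^{\a\b}_\z\rangle - w(\z)\langle f_1,k^\a_\z\rangle = f_2(\z),
$$
since $f_2\in H^2(\a\b)$ and $f_1\perp H^2(\a)$ kills the second term.

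Similarly, write $f_1=\frac{\D_0}{g_0}\overline{\varphi}$ with $\varphi\in H^2(\overline{\nu_0}\d_0\overline\a)$. Then
$$
\langle f,\widetilde K^{\a,\b}_\z\rangle=-\overline{w(\z)}\langle f_2,\widetilde k^{\a\b}_\z\rangle+\langle f_1,\widetilde k^\a_\z\rangle=\overline{\varphi(\z)},
$$
because $f_2\perp H^2_\perp(\a\b)$. This is exactly the value at $\z$ of the conjugate-analytic function $\frac{g_0}{\D_0}f_1=\overline\varphi$.

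\textbf{Main obstacle.} The only delicate point is justifying the $L^w$ inner product identity when $W$ is not invertible, i.e.\ on the set where $|w|=1$. This is handled by the range-norm definition together with the observation that $W^{1/2}u-P_{(\mathrm{Ker}W)^\perp}W^{1/2}u=0$. The other point requiring care is the character bookkeeping, namely that $w\cdot H^2(\overline{\nu_0}\d_0\overline{\a\b})\subset H^2(\overline{\nu_0}\d_0\overline\a)$.
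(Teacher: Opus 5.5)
Your proposal is correct and follows the paper's proof essentially step for step. You use the same membership checks, namely analyticity of the first component of $K^{\alpha,\beta}_\zeta$ and the orthogonality computations against $H^2(\alpha)$ and against $H^2_\perp(\alpha\beta)$, and the same reduction of the pairing with a kernel of the form $Wu$ to the plain $L^2$ pairing $\langle f,u\rangle$. The only differences are cosmetic: you justify the identity $\langle f,Wu\rangle_{L^w}=\langle f,u\rangle_{L^2}$, which the paper uses silently, and you obtain $\widetilde K^{\alpha,\beta}_{\zeta,-}\in H^2_\perp(\alpha)$ by noting that multiplication by $\overline{w}$ maps $H^2_\perp(\alpha\beta)$ into $H^2_\perp(\alpha)$ (because multiplication by $w$ maps $H^2(\alpha)$ into $H^2(\alpha\beta)$), whereas the paper reads this off formula \eqref{June22-01}.
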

\begin{proof}
It follows immediately from definition \eqref{12.8} that
$$
K_{\zeta,+}^{\alpha, \beta}(t)=
k^{\alpha\beta}_\zeta(t)-w(t)\overline{w(\zeta)} k^\alpha_\zeta(t)
\in H^2(\alpha\beta).
$$
Clearly
$$
K_{\zeta,-}^{\alpha, \beta}(t)=
\overline{w(t)}k^{\alpha\beta}_\zeta(t)-\overline{w(\zeta)} k^\alpha_\zeta(t)\in L^2(\a).
$$
We show that it is orthogonal to $H^2(\a)$. For $h\in H^2(\a)$
$$
\left\langle h, K_{\zeta,-}^{\alpha, \beta}\right\rangle=
\left\langle h, \overline w k^{\alpha\beta}_\zeta-\overline{w(\zeta)} k^\alpha_\zeta\right\rangle
$$
$$
=
\left\langle w h, k^{\alpha\beta}_\zeta\rangle-w(\z)\langle h, k^\alpha_\zeta\right\rangle
=w(\z) h(\z) - w(\z) h(\z)=0.
$$
Thus, $K_{\zeta,-}^{\alpha, \beta}\in H^2_\perp(\a)$ and, therefore,
$K_{\zeta}^{\alpha, \beta}\in H^w(\alpha, \beta)$.
Now
$$
\left\langle\begin{bmatrix} f_2 \\ f_1 \end{bmatrix}, K_{\zeta}^{\alpha, \beta}\right\rangle_{H^w(\alpha, \beta)}
=
\left\langle\begin{bmatrix} f_2 \\ f_1 \end{bmatrix},
\begin{bmatrix} k^{\alpha\beta}_\zeta \\
-\overline{w(\zeta)} k^\alpha_\zeta \end{bmatrix}
\right\rangle_{L^2}
$$
$$
=
\left\langle f_2 , k^{\alpha\beta}_\zeta \right\rangle_{H^2(\alpha\beta)}
+
\left\langle f_1 , -\overline{w(\zeta)} k^{\alpha}_\zeta \right\rangle_{L^2 (\alpha)}
= f_2(\zeta),
$$
since $f_2\in H^2(\alpha\beta)$ and $f_1\in H^2_\perp(\alpha)$.

Proof of the second part is analogous. In view of \eqref{June22-01},
$$
\widetilde
K_{\zeta,-}^{\alpha, \beta}
=
\widetilde k^\alpha_\zeta(t)
-\overline{w(t)}w(\zeta)\widetilde k^{\alpha\beta}_\zeta(t)
\in
\frac{\D_0}{g_0}\overline{H^2(\overline\nu_0\d_0\overline\alpha)}
=
H^2_\perp(\alpha).
$$
Clearly
$$
\widetilde K_{\zeta,+}^{\alpha, \beta}
=
w(t)\widetilde k^\alpha_\zeta(t)
- w(\zeta)\widetilde k^{\alpha\beta}_\zeta(t)\in L^2(\a\b).
$$
We show that is is orthogonal to $H^2_\perp(\a\b)$.
Since
$$
H^2_\perp(\a\b)=\dfrac{\D_0}{g_0}\overline {H^2(\delta_0 \overline{\nu_0}\ \overline{\alpha\b})}
$$
we compute
$$
\left\langle\widetilde K_{\zeta,+}^{\alpha, \beta}, \dfrac{\D_0}{g_0}\overline h\right\rangle
=\left\langle
w \widetilde k^\alpha_\zeta
- w(\zeta)\widetilde k^{\alpha\beta}_\zeta, \dfrac{\D_0}{g_0}\overline h\right\rangle
$$
(using again \eqref{June22-01})
$$
=\left\langle
w \dfrac{\Delta_0}{g_0}\overline{k^{\delta_0 \overline{\nu_0}\ \overline\alpha}_\zeta}
- w(\zeta)\dfrac{\Delta_0}{g_0}\overline{k^{\delta_0 \overline{\nu_0}\ \overline{\alpha\b}}_\zeta}, \dfrac{\D_0}{g_0}\overline h\right\rangle
$$
$$
=\left\langle
w \overline{k^{\delta_0 \overline{\nu_0}\ \overline\alpha}_\zeta}
- w(\zeta)\overline{k^{\delta_0 \overline{\nu_0}\ \overline{\alpha\b}}_\zeta}, \overline h\right\rangle
$$
$$
=\left\langle
w h, {k^{\delta_0 \overline{\nu_0}\ \overline\alpha}_\zeta}
\right\rangle
-
w(\zeta)
\left\langle h, {k^{\delta_0 \overline{\nu_0}\ \overline{\alpha\b}}_\zeta}\right\rangle
$$
$$
=w(\z)h(\z)-w(\z)h(z)=0.
$$
Now
$$
\left\langle\begin{bmatrix} f_2 \\ f_1 \end{bmatrix}, \widetilde K_{\zeta}^{\alpha, \beta}\right\rangle_{H^w(\alpha, \beta)}
=
\left\langle\begin{bmatrix} f_2 \\ f_1 \end{bmatrix},
\begin{bmatrix} -w(\zeta)\widetilde k^{\alpha\beta}_\zeta \\
\widetilde k^\alpha_\zeta \end{bmatrix}
\right\rangle_{L^2}
$$
$$
=
\left\langle f_2 , -w(\zeta)\widetilde k^{\alpha\beta}_\zeta\right\rangle_{L^2(\alpha\beta)}
+
\left\langle f_1 , \widetilde k^\alpha_\zeta \right\rangle_{H^2_\perp (\alpha)}
=\left(\frac{g_0}{\D_0}f_1\right)(\zeta),
$$
since $f_2\in H^2(\alpha\beta)$ and $f_1\in H^2_\perp(\alpha)$.
\end{proof}

\begin{remark}\label{200524_01}
Since $k_\z^\a\perp H^2_\perp(\a)$, we have that
$$
\left[\begin{array}{cc} 1& w(t) \\
\overline{w(t)} & 1\end{array}\right]\left[\begin{array}{c} 0 \\
k^\alpha_\zeta(t) \end{array}\right]
\perp
H^w(\alpha, \beta).
$$
Then, in view of \eqref{12.8},
$$
K_{\zeta}^{\alpha, \beta}(t)
=\left[\begin{array}{cc} 1& w(t) \\
\overline{w(t)} & 1\end{array}\right]\left[\begin{array}{c} k^{\alpha\beta}_\zeta(t) \\
-\overline{w(\zeta)} k^\alpha_\zeta(t) \end{array}\right]
$$
$$
=P_{H^w(\a,\b)}
\left[\begin{array}{cc} 1& w(t) \\
\overline{w(t)} & 1\end{array}\right]\left[\begin{array}{c} k^{\alpha\beta}_\zeta(t) \\
-\overline{w(\zeta)} k^\alpha_\zeta(t) \end{array}\right]
$$
\be
=P_{H^w(\a,\b)}
\left[\begin{array}{cc} 1& w(t) \\
\overline{w(t)} & 1\end{array}\right]\left[\begin{array}{c} k^{\alpha\beta}_\zeta(t) \\
c k^\alpha_\zeta(t) \end{array}\right],
\label{20524_02}
\ee
where $c$ is any complex number, in particular $0$.
\end{remark}

\section{Towards Interpolation}
If $w$ satisfies certain interpolation conditions, we will have a family of isometric
colligations contained in the family of unitary colligations \eqref{241225-01}.
For example
\begin{lemma}
Multiplication by $\overline{g_0}$ maps
$$
\begin{bmatrix}
1 & w \\
\overline{w} & 1
\end{bmatrix}
\begin{bmatrix}
0
\\
k^\a_0
\end{bmatrix}
\dfrac{k^\a_\z(0)}{k^\a_0(0)}
\ \overline{w(\zeta)}
\oplus
K_{\zeta}^{\alpha, \beta}
$$
to
$$
K_{\zeta}^{\overline{\nu_0}\alpha, \beta}
\ \overline{g_0(\z)}
\oplus
\begin{bmatrix}
1 & w \\
\overline{w} & 1
\end{bmatrix}
\begin{bmatrix}
\dfrac{\D_0}{g_0}\overline{k^{\d_0\overline{\alpha\b}}_0}
\\
0
\end{bmatrix}
\dfrac{k^{\a\b}_\z(0)}{\D_0(0)}
.
$$
\end{lemma}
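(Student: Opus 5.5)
The plan is a direct computation. Multiply the left-hand side by $\overline{g_0}$ and reorganize the result inside the bracket $W[\,\cdot\,]$, where $W=\begin{bmatrix}1&w\\ \overline w&1\end{bmatrix}$. By linearity the left-hand side equals
$$
W\begin{bmatrix} k^{\a\b}_\z \\ \overline{w(\z)}\Bigl(\dfrac{k^\a_\z(0)}{k^\a_0(0)}k^\a_0-k^\a_\z\Bigr)\end{bmatrix}.
$$
After multiplication by $\overline{g_0}$ this becomes
$$
W\begin{bmatrix} \overline{g_0}k^{\a\b}_\z \\ \overline{w(\z)}\,\overline{g_0}\Bigl(\dfrac{k^\a_\z(0)}{k^\a_0(0)}k^\a_0-k^\a_\z\Bigr)\end{bmatrix}.
$$
Two kernel decompositions will be used.

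\textbf{Step 1 (second component).} The function $k^\a_\z-\frac{k^\a_\z(0)}{k^\a_0(0)}k^\a_0$ lies in $H^2(\a)$ and vanishes at $0$. Hence it is divisible by $g_0$, whose only zeros are at the orbit of $0$. This is the standard fact that $H^2(\a)\ominus\{k^\a_0\}=g_0H^2(\overline{\nu_0}\a)$. Next compute the projection of $\overline{g_0}k^\a_\z$ onto $H^2(\overline{\nu_0}\a)$: for $h\in H^2(\overline{\nu_0}\a)$,
$$
\langle h,\overline{g_0}k^\a_\z\rangle=\langle g_0h,k^\a_\z\rangle=g_0(\z)h(\z).
$$
Therefore
$$
\overline{g_0}\Bigl(k^\a_\z-\tfrac{k^\a_\z(0)}{k^\a_0(0)}k^\a_0\Bigr)=\overline{g_0(\z)}\,k^{\overline{\nu_0}\a}_\z .
$$
Indeed, the left side lies in $H^2(\overline{\nu_0}\a)$ and has the same inner products as the right side. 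So the second entry is $-\overline{w(\z)}\,\overline{g_0(\z)}k^{\overline{\nu_0}\a}_\z$.

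\textbf{Step 2 (first component).} Decompose $\overline{g_0}k^{\a\b}_\z$ in $L^2(\overline{\nu_0}\a\b)$. Its projection onto $H^2(\overline{\nu_0}\a\b)$ is $\overline{g_0(\z)}k^{\overline{\nu_0}\a\b}_\z$, by the same computation as in Step 1. The remainder lies in $H^2_\perp(\overline{\nu_0}\a\b)\cap \overline{g_0}H^2(\a\b)$. This intersection is one-dimensional: it is $\overline{g_0}$ times the orthogonal complement of $g_0H^2(\overline{\nu_0}\a\b)$ in $H^2(\a\b)$, which is spanned by $\overline{g_0}k^{\a\b}_0$. Its coefficient is found by pairing with $k^{\a\b}_0$. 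Projecting $k^{\a\b}_\z$ onto $\{k^{\a\b}_0\}$ gives the coefficient $\frac{k^{\a\b}_\z(0)}{k^{\a\b}_0(0)}$. So the remainder is
$$
\frac{k^{\a\b}_\z(0)}{k^{\a\b}_0(0)}\,\overline{g_0}k^{\a\b}_0 .
$$
Now rewrite this in the form of the statement using \eqref{250608_01} of Lemma \ref{250607_01}, applied with $\a$ replaced by $\a\b$ and conjugated:
$$
\overline{g_0}k^{\a\b}_0=\frac{k^{\a\b}_0(0)}{\D_0(0)}\,\frac{\D_0}{g_0}\overline{k^{\d_0\overline{\a\b}}_0}.
$$
This uses $\overline{g_0}=1/g_0$ on $\bbT$. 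The remainder therefore equals
$$
\frac{k^{\a\b}_\z(0)}{\D_0(0)}\frac{\D_0}{g_0}\overline{k^{\d_0\overline{\a\b}}_0},
$$
which is exactly the second summand of the target.

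\textbf{Step 3 (assembly).} Collecting terms, $\overline{g_0}$ applied to the left-hand side equals
$$
\overline{g_0(\z)}\,W\begin{bmatrix}k^{\overline{\nu_0}\a\b}_\z\\ -\overline{w(\z)}k^{\overline{\nu_0}\a}_\z\end{bmatrix}
+W\begin{bmatrix}\frac{\D_0}{g_0}\overline{k^{\d_0\overline{\a\b}}_0}\\0\end{bmatrix}\frac{k^{\a\b}_\z(0)}{\D_0(0)} .
$$
By \eqref{12.8} applied with the character $\overline{\nu_0}\a$, the first term is $\overline{g_0(\z)}K^{\overline{\nu_0}\a,\b}_\z$. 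The orthogonality of the two summands on each side is already given by the decompositions of Section 3 together with Lemma \ref{June21-10}.

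The main obstacle is Step 2. It needs the identification $H^2(\a\b)\ominus g_0H^2(\overline{\nu_0}\a\b)=\{k^{\a\b}_0\}$, that is, divisibility by $g_0$ of functions vanishing at $0$ within the automorphic Hardy class. I would justify this by noting that $f/g_0$ has bounded characteristic and is $\overline{\nu_0}\a$-automorphic. Since $|g_0|=1$ a.e. on $\bbT$, it lies in $H^2$ by Smirnov's theorem. The same fact is used implicitly in the orthonormal basis expansion \eqref{260117_01} from \cite{SY}, so it may be cited from there.
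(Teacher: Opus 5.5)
Your argument is correct, but it is organized differently from the paper's.

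\textbf{The paper's route.} It leaves the coefficient $A$ of $W\begin{bmatrix}0\\ k^\a_0\end{bmatrix}$ free. It computes the $L^w(\overline{\nu_0}\a,\b)$ inner product of $\overline{g_0}$ times the input with the defect vector $W\begin{bmatrix}\frac{\D_0}{g_0}\overline{k^{\d_0\overline{\a\b}}_0}\\ 0\end{bmatrix}$ directly by the Direct Cauchy Theorem. It notes that the $w(0)$-term cancels precisely when $A=\overline{w(\z)}\,k^\a_\z(0)/k^\a_0(0)$, and divides by the norm $k^{\d_0\overline{\a\b}}_0(0)$. It then shows that the first bracket entry of the remainder is $\overline{g_0(\z)}k^{\overline{\nu_0}\a\b}_\z$ by testing against $H^2_\perp(\overline{\nu_0}\a\b)=\frac{\D_0}{g_0}\overline{H^2(\d_0\overline{\a\b})}$, again with the Direct Cauchy Theorem. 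The second entry is dismissed as ``similar''.

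\textbf{Your route.} You treat each bracket entry separately. The second entry goes through the division property $\{f\in H^2(\a):f(0)=0\}=g_0H^2(\overline{\nu_0}\a)$. The first goes through the splitting $\overline{g_0}H^2(\a\b)=H^2(\overline{\nu_0}\a\b)\oplus\{\overline{g_0}k^{\a\b}_0\}$, with \eqref{250608_01} (for $\a\b$) turning $\overline{g_0}k^{\a\b}_0$ into the paper's defect vector.

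\textbf{Comparison.} In your version the Direct Cauchy Theorem enters only through Lemma~\ref{250607_01}. The prescribed $A$ shows up as the value that makes the second entry vanish at $0$, and the second entry is actually computed. The paper's version needs no division argument and gets the output coefficient from a single inner product. Your Step~3 bookkeeping is also cleaner than the paper's display \eqref{240908-02}. That display's second entry carries a spurious term $-\overline{w}\,\frac{k^{\a\b}_\z(0)}{\D_0(0)}\frac{\D_0}{g_0}\overline{k^{\d_0\overline{\a\b}}_0}$. Subtracting a multiple of $W\begin{bmatrix}u\\ 0\end{bmatrix}$ changes only the first bracket entry, exactly as you have it.

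\textbf{One repair.} Your justification of the division property needs fixing. Bounded characteristic together with $|g_0|=1$ a.e.\ on $\bbT$ does not give $f/g_0\in H^2$, because Smirnov's theorem requires the class $N^+$; consider $1/S$ for a singular inner $S$. The correct reason is that $g_0$ is a Blaschke product whose zeros are the points $\g(0)$, $\g\in\G$. Automorphy forces $f(\g(0))=\a(\g)f(0)=0$ on that whole orbit. Hence $g_0$ divides the Blaschke factor of $f$, so $f/g_0$ is $f$ with some Blaschke factors removed. It therefore lies in $H^2$ directly, and it is $\overline{\nu_0}\a$-automorphic.
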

\begin{proof}
Consider
$$
A
\begin{bmatrix}
1 & w \\
\overline{w} & 1
\end{bmatrix}
\begin{bmatrix}
0
\\
k^\a_0
\end{bmatrix}
\oplus
\left[\begin{array}{cc} 1& w \\
\overline{w} & 1\end{array}\right]\left[\begin{array}{c} k^{\alpha\beta}_\zeta \\
-\overline{w(\zeta)} k^\alpha_\zeta \end{array}\right]
=
\left[\begin{array}{cc} 1& w \\
\overline{w} & 1\end{array}\right]\left[\begin{array}{c} k^{\alpha\beta}_\zeta \\
A k^\a_0-\overline{w(\zeta)} k^\alpha_\zeta \end{array}\right].
$$
We multiply this by $\overline{g_0}$ and compute the projection on
$\begin{bmatrix}
1 & w \\
\overline{w} & 1
\end{bmatrix}
\begin{bmatrix}
\dfrac{\D_0}{g_0}\overline{k^{\d_0\overline{\alpha\b}}_0}
\\
0
\end{bmatrix}.$
To this end we first compute
$$
\left\langle
\overline{g_0}
\left[\begin{array}{cc} 1& w \\
\overline{w} & 1\end{array}\right]\left[\begin{array}{c} k^{\alpha\beta}_\zeta \\
A k^\a_0-\overline{w(\zeta)} k^\alpha_\zeta \end{array}\right]
,
\begin{bmatrix}
1 & w \\
\overline{w} & 1
\end{bmatrix}
\begin{bmatrix}
\dfrac{\D_0}{g_0}\overline{k^{\d_0\overline{\alpha\b}}_0}
\\
0
\end{bmatrix}
\right\rangle_{L^w(\overline{\nu_0}\a.\b)}
$$
$$
=\langle
k^{\alpha\beta}_\zeta +
w(A k^\a_0-\overline{w(\zeta)} k^\alpha_\zeta)
,
\D_0 \overline{k^{\d_0\overline{\alpha\b}}_0}
\rangle_{L^2}
$$
using the Direct Cauchy Theorem
$$
=
\dfrac
{k^{\alpha\beta}_\zeta (0)+
w(0)(A k^\a_0(0)-\overline{w(\zeta)} k^\alpha_\zeta(0))}
{\D_0(0)}
k^{\d_0\overline{\alpha\b}}_0(0)
=
\dfrac
{k^{\alpha\beta}_\zeta (0)}
{\D_0(0)}
k^{\d_0\overline{\alpha\b}}_0(0),
$$
if $A=\overline{w(\zeta)} \dfrac{k^\a_\z(0)}{k^\a_0(0)}$.
Next we need to divide by
$$
\left\langle
\begin{bmatrix}
1 & w \\
\overline{w} & 1
\end{bmatrix}
\begin{bmatrix}
\dfrac{\D_0}{g_0}\overline{k^{\d_0\overline{\alpha\b}}_0}
\\
0
\end{bmatrix}
,
\begin{bmatrix}
1 & w \\
\overline{w} & 1
\end{bmatrix}
\begin{bmatrix}
\dfrac{\D_0}{g_0}\overline{k^{\d_0\overline{\alpha\b}}_0}
\\
0
\end{bmatrix}
\right\rangle_{L^w(\overline{\nu_0}\a.\b)}
=k^{\d_0\overline{\alpha\b}}_0(0).
$$
Thus, the requisite projection is
$$
\begin{bmatrix}
1 & w \\
\overline{w} & 1
\end{bmatrix}
\begin{bmatrix}
\dfrac{\D_0}{g_0}\overline{k^{\d_0\overline{\alpha\b}}_0}
\\
0
\end{bmatrix}
\dfrac{k^{\alpha\beta}_\zeta (0)}{\D_0(0)}
.
$$
Now we consider
$$
\overline{g_0}
\left[\begin{array}{cc} 1& w \\
\overline{w} & 1\end{array}\right]\left[\begin{array}{c} k^{\alpha\beta}_\zeta \\
\overline{w(\zeta)} \dfrac{k^\a_0}{k^\a_0(0)}k^\alpha_\zeta(0)-\overline{w(\zeta)} k^\alpha_\zeta \end{array}\right]
-
\dfrac
{k^{\alpha\beta}_\zeta (0)}
{\D_0(0)}
\begin{bmatrix}
1 & w \\
\overline{w} & 1
\end{bmatrix}
\begin{bmatrix}
\dfrac{\D_0}{g_0}\overline{k^{\d_0\overline{\alpha\b}}_0}
\\
0
\end{bmatrix}
$$
\be\label{240908-02}
=
\left[\begin{array}{cc} 1& w \\
\overline{w} & 1\end{array}\right]\left[\begin{array}{c}
k^{\alpha\beta}_\zeta\overline{g_0}
-
\dfrac{k^{\alpha\beta}_\zeta (0)}{\D_0(0)}
\dfrac{\D_0}{g_0}\overline{k^{\d_0\overline{\alpha\b}}_0}
 \\
\left(
\overline{w(\zeta)} \dfrac{k^\a_0}{k^\a_0(0)}k^\alpha_\zeta(0)-\overline{w(\zeta)} k^\alpha_\zeta
\right)
\overline{g_0}
-\overline w
\dfrac{k^{\alpha\beta}_\zeta (0)}{\D_0(0)}
\dfrac{\D_0}{g_0}\overline{k^{\d_0\overline{\alpha\b}}_0}
\end{array}\right]
\ee
Next we show that
\be\label{240908_01}
k^{\alpha\beta}_\zeta\overline{g_0}
-
\dfrac{k^{\alpha\beta}_\zeta (0)}{\D_0(0)}
\dfrac{\D_0}{g_0}\overline{k^{\d_0\overline{\alpha\b}}_0}
=\overline{g_0(\z)}k^{\overline{\nu_0}\alpha\beta}_\zeta .
\ee
Clearly this function is in $L^2(\overline{\nu_0}\alpha\beta)$. To show that it is in
$H^2(\overline{\nu_0}\alpha\beta)$ we prove that it is orthogonal to
$H^2_\perp(\overline{\nu_0}\alpha\beta)$. Indeed,
$$
\left\langle
k^{\alpha\beta}_\zeta\overline{g_0}
-
\dfrac{k^{\alpha\beta}_\zeta (0)}{\D_0(0)}
\dfrac{\D_0}{g_0}\overline{k^{\d_0\overline{\alpha\b}}_0}
,
\dfrac{\D_0}{g_0}\overline h
\right\rangle
$$
$$
=
\left\langle
k^{\alpha\beta}_\zeta h
,
\D_0
\right\rangle
-
\dfrac{k^{\alpha\beta}_\zeta (0)}{\D_0(0)}
\left\langle
\overline{k^{\d_0\overline{\alpha\b}}_0}
,
\overline h
\right\rangle
$$
by the Direct Cauchy Theorem and by the reproducing kernel property
$$
=
\dfrac
{k^{\alpha\beta}_\zeta(0) h(0)}
{\D_0(0)}
-
\dfrac{k^{\alpha\beta}_\zeta (0)}{\D_0(0)}
h(0)
=0.
$$
Now for $h\in H^2(\overline{\nu_0}\alpha\beta)$
$$
\left\langle
h,
k^{\alpha\beta}_\zeta\overline{g_0}
-
\dfrac{k^{\alpha\beta}_\zeta (0)}{\D_0(0)}
\dfrac{\D_0}{g_0}\overline{k^{\d_0\overline{\alpha\b}}_0}
\right\rangle
$$
$$
=
\left\langle
g_0 h,
k^{\alpha\beta}_\zeta
\right\rangle
-
\dfrac{k^{\alpha\beta}_\zeta (0)}{\D_0(0)}
\left\langle
h g_0 k^{\d_0\overline{\alpha\b}},
\D_0
\right\rangle
$$
$$
=
g_0(\z) h(\z)
-
\dfrac{k^{\alpha\beta}_\zeta (0)}{\D_0(0)}
\dfrac
{h(0) g_0(0) k^{\d_0\overline{\alpha\b}}(0)}
{\D_0(0)}
=
g_0(\z) h(\z)
=\left\langle h, \overline{g_0(\z)}k^{\overline{\nu_0}\alpha\beta}_\zeta\right\rangle .
$$
\eqref{240908_01} follows. Similarly one can verify that the second entry in \eqref{240908-02}
$$
\left(
\overline{w(\zeta)} \dfrac{k^\a_0}{k^\a_0(0)}k^\alpha_\zeta(0)-\overline{w(\zeta)} k^\alpha_\zeta
\right)
\overline{g_0}
-\overline w
\dfrac{k^{\alpha\beta}_\zeta (0)}{\D_0(0)}
\dfrac{\D_0}{g_0}\overline{k^{\d_0\overline{\alpha\b}}_0}
=
-\overline{g_0(\z)}\ \overline{w(\z)}k^{\overline{\nu_0}\alpha}_\zeta.
$$
This completes the proof of the lemma.
\end{proof}
\begin{corollary}\label{250112-01}
Multiplication by $\overline{g_0}$ maps $($isometrically$)$
$$
\sum\limits_k
\left(
\begin{bmatrix}
1 & w \\
\overline{w} & 1
\end{bmatrix}
\begin{bmatrix}
0
\\
k^\a_0
\end{bmatrix}
\dfrac{k^\a_{\z_k}(0)}{k^\a_0(0)}
\ \overline{w(\zeta_k)}
\oplus
K_{\zeta_k}^{\alpha, \beta}
\right)
x_k
$$
to
$$
\sum\limits_k
\left(
K_{\zeta_k}^{\overline{\nu_0}\alpha, \beta}
\ \overline{g_0(\z_k)}
\oplus
\begin{bmatrix}
1 & w \\
\overline{w} & 1
\end{bmatrix}
\begin{bmatrix}
\dfrac{\D_0}{g_0}\overline{k^{\d_0\overline{\alpha\b}}_0}
\\
0
\end{bmatrix}
\dfrac{k^{\a\b}_{\z_k}(0)}{\D_0(0)}
\right)x_k
.
$$
\end{corollary}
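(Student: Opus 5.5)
The corollary follows from the preceding Lemma together with linearity and unitarity of multiplication by $\overline{g_0}$. My plan has three steps.

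First, I note that multiplication by $\overline{g_0}$ is a linear map from $L^w(\a,\b)$ onto $L^w(\overline{\nu_0}\a,\b)$. It is unitary, as observed in Section 3: $|g_0|=1$ on $\bbT$, and the factor $\overline{g_0}$ is scalar. A scalar factor commutes with $W^{1/2}$ and with $W^{[-1/2]}$, so the range norm is preserved.

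Second, I apply the Lemma separately to each node $\z_k$. For every $k$, the vector
$$
\begin{bmatrix}
1 & w \\
\overline{w} & 1
\end{bmatrix}
\begin{bmatrix}
0
\\
k^\a_0
\end{bmatrix}
\dfrac{k^\a_{\z_k}(0)}{k^\a_0(0)}\ \overline{w(\zeta_k)}
\oplus
K_{\zeta_k}^{\alpha, \beta}
$$
is sent to
$$
K_{\zeta_k}^{\overline{\nu_0}\alpha, \beta}\ \overline{g_0(\z_k)}
\oplus
\begin{bmatrix}
1 & w \\
\overline{w} & 1
\end{bmatrix}
\begin{bmatrix}
\dfrac{\D_0}{g_0}\overline{k^{\d_0\overline{\alpha\b}}_0}
\\
0
\end{bmatrix}
\dfrac{k^{\a\b}_{\z_k}(0)}{\D_0(0)} .
$$
Both sides live in the decompositions of Section 3. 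The first summand on the left lies in $\bigl\{W\,[0;\,k^\a_0]\bigr\}$, which is orthogonal to $H^w(\a,\b)$ by Remark \ref{200524_01}. The second summand on the right lies in the one-dimensional complement of $H^w(\overline{\nu_0}\a,\b)$ described in \eqref{241225-01}.

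Third, I multiply the $k$-th identity by the scalar $x_k$ and sum over $k$. Linearity of multiplication by $\overline{g_0}$ gives the stated mapping of the sums, with the direct-sum structure preserved: the components in $\bigl\{W[0;k^\a_0]\bigr\}$ add within that line, and the components in $H^w(\a,\b)$ add within that space, and likewise on the output side. Isometry is automatic from unitarity on the whole of $L^w(\a,\b)$. So the sums have equal norms, and hence the Gram matrices of the input vectors and of the output vectors agree. This is exactly the statement that the resulting isometric colligation is contained in the unitary colligation \eqref{241225-01}.

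If the sum over $k$ is infinite, I would justify it in one of two ways: interpret it for finitely supported $(x_k)$ and then pass to the closure using the isometry, or assume that the series converges in $L^w(\a,\b)$. The only real content lies in the Lemma, which is already proved, so I expect no genuine obstacle beyond this bookkeeping of convergence.
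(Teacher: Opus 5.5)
Your proposal is correct and follows the paper's argument: the paper gives no separate proof, and the corollary follows from the preceding Lemma applied at each node $\zeta_k$, multiplied by $x_k$ and summed by linearity. Isometry is inherited from the unitarity of multiplication by $\overline{g_0}$ from $L^w(\alpha,\beta)$ onto $L^w(\overline{\nu_0}\alpha,\beta)$, as noted in Section 3, and your remarks on where each summand lies and on finite versus infinite sums are harmless additions to that argument.
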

\begin{corollary}\label{250112-02}
By taking squares of the norms in Corollary \ref{250112-01}, we obtain the family of identities
$$
\sum\limits_{k,j}
\overline{x_j}
w(\z_j)
\dfrac{\overline{k^\a_{\z_j}(0)}k^\a_{\z_k}(0)}{k^\a_0(0)}
\ \overline{w(\zeta_k)}
x_k
+
\sum\limits_{k,j}
\overline{x_j}
\left(
k^{\alpha\beta}_{\z_k}(\zeta_j)-w(\zeta_j)\overline{w(\z_k)} k^\alpha_{\z_k}(\zeta_j)
\right)
x_k
$$
$$
=
\sum\limits_{k,j}
\overline{x_j}
\overline{\left(\dfrac{k^{\a\b}_{\z_j}(0)}{\D_0(0)}\right)}
k^{\d_0\overline{\alpha\b}}_0(0)
\dfrac{k^{\a\b}_{\z_k}(0)}{\D_0(0)}
x_k
$$
$$
+
\sum\limits_{k,j}
\overline{x_j}g_0(\z_j)
\left(
k^{\overline{\nu_0}\alpha\beta}_{\z_k}(\zeta_j)-w(\zeta_j)\overline{w(\z_k)} k^{\overline{\nu_0}\alpha}_{\z_k}(\zeta_j)
\right)
\overline{g_0(\z_k)}
x_k .
$$
\end{corollary}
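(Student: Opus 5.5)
We compute the squared $L^w$-norms of both sides of Corollary \ref{250112-01} and use that multiplication by $\overline{g_0}$ is unitary from $L^w(\a,\b)$ onto $L^w(\overline{\nu_0}\a,\b)$, so the two Gram forms coincide. On each side the two summands are orthogonal. On the left, $\begin{bmatrix}1&w\\ \overline w&1\end{bmatrix}\begin{bmatrix}0\\ k^\a_0\end{bmatrix}$ is orthogonal to $H^w(\a,\b)$ by Remark \ref{200524_01}. On the right, the last vector lies in the third summand of the orthogonal decomposition of $L^w(\overline{\nu_0}\a,\b)$ from Section 3. Hence each squared norm splits as a sum of two Hermitian forms in $(x_k)$.

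Next we evaluate the four Gram matrices. All of them reduce to $L^2$ pairings, because for $f=W u$ and $g=W v$ one has $\langle f,g\rangle_{L^w}=\langle Wu,v\rangle_{L^2}$ whenever $u,v\perp\ker W$; the range norm makes this routine.

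First, $\|W[0,k^\a_0]^T\|^2=k^\a_0(0)$. This gives the $(j,k)$ entry
$$\overline{w(\z_j)}\frac{k^\a_{\z_j}(0)}{k^\a_0(0)}\cdot\overline{\overline{w(\z_k)}\frac{k^\a_{\z_k}(0)}{k^\a_0(0)}}\,k^\a_0(0),$$
matched with the conjugation convention $\sum_{k,j}\overline{x_j}(\cdot)x_k$ of the statement. Using $\overline{k^\a_{\z}(0)}=k^\a_0(\z)$, the entries take the displayed form.

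Second, the reproducing property from Lemma \ref{June21-10} gives $\langle K^{\a,\b}_{\z_k},K^{\a,\b}_{\z_j}\rangle=K^{\a,\b}_{\z_k,+}(\z_j)=k^{\a\b}_{\z_k}(\z_j)-w(\z_j)\overline{w(\z_k)}k^\a_{\z_k}(\z_j)$.

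Third, the same reproducing property with $\overline{\nu_0}\a$ in place of $\a$ gives the last term, weighted by $g_0(\z_j)\overline{g_0(\z_k)}$.

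Fourth, the norm of $W\bigl[\tfrac{\D_0}{g_0}\overline{k^{\d_0\overline{\a\b}}_0},0\bigr]^T$ squared equals $k^{\d_0\overline{\a\b}}_0(0)$, as computed in the proof of the preceding lemma. This gives the first term on the right.

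The only delicate point is bookkeeping of conjugations and the index order $(j,k)$ in the first left-hand term. There the conjugates of $k^\a_{\z}(0)$ must be placed consistently with the statement, which writes $w(\z_j)\,\overline{k^\a_{\z_j}(0)}\,k^\a_{\z_k}(0)\,\overline{w(\z_k)}$. I would check this by testing the case of a single point, where both sides reduce to squared moduli.
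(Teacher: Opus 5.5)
Your proposal is correct and is the paper's own argument. The paper simply says to take squared norms in Corollary \ref{250112-01}, and you spell this out through the isometry of multiplication by $\overline{g_0}$, the two orthogonal splittings, and the four Gram computations.

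One bookkeeping fix is needed in the first left-hand term. Write $a_k$ for the $k$-th vector in that summand. The coefficient of $\overline{x_j}x_k$ in $\|\sum_k a_k x_k\|^2$ is $\langle a_k,a_j\rangle$, which is the convention you already use for the three kernel terms. The first-term entry you display is the complex conjugate of this, and your single-point test cannot catch the error because that entry is real when $j=k$. Computing $\langle a_k,a_j\rangle$ directly gives exactly $w(\z_j)\overline{k^\a_{\z_j}(0)}k^\a_{\z_k}(0)\overline{w(\z_k)}/k^\a_0(0)$.
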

\begin{corollary}
If a $\b$ automorphic function $w$, analytic on $\bbD$  and bounded in modulus by $1$, solves the Nevanlinna-Pick interpolation problem
\be\label{260227_01}
w(\z_k)=w_k,
\ee
then $w_k$ satisfy the family of identities (isometries)
$$
\sum\limits_{k,j}
\overline{x_j}
w_j
\dfrac{\overline{k^\a_{\z_j}(0)}k^\a_{\z_k}(0)}{k^\a_0(0)}
\ \overline{w_k}
x_k
+
\sum\limits_{k,j}
\overline{x_j}
\left(
k^{\alpha\beta}_{\z_k}(\zeta_j)-w_j\overline{w_k} k^\alpha_{\z_k}(\zeta_j)
\right)
x_k
$$
$$
=
\sum\limits_{k,j}
\overline{x_j}
\overline{\left(\dfrac{k^{\a\b}_{\z_j}(0)}{\D_0(0)}\right)}
k^{\d_0\overline{\alpha\b}}_0(0)
\dfrac{k^{\a\b}_{\z_k}(0)}{\D_0(0)}
x_k
$$
$$
+
\sum\limits_{k,j}
\overline{x_j}g_0(\z_j)
\left(
k^{\overline{\nu_0}\alpha\beta}_{\z_k}(\zeta_j)-w_j\overline{w_k} k^{\overline{\nu_0}\alpha}_{\z_k}(\zeta_j)
\right)
\overline{g_0(\z_k)}
x_k .
$$
and this family of isometries can be isometrically imbedded in the family of unitary colligations \eqref{241225-01}
via Corollary \ref{250112-01}.
\end{corollary}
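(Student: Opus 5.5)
The corollary has two parts: the family of identities, and the embedding claim. The plan is to specialize Corollary \ref{250112-02} to the interpolation data and then read the embedding off Corollary \ref{250112-01}.

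\textbf{The identities.} Corollary \ref{250112-02} already gives, for every $\b$-automorphic $w$ analytic on $\bbD$ with $|w|\le 1$ and every finite choice of points $\z_k$ and coefficients $x_k$, an identity involving $w$ only through the values $w(\z_j)$ and $w(\z_k)$. I will check this by inspecting both sides.

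On the left-hand side, the first term contains $w(\z_j)$ and $\overline{w(\z_k)}$. The second term contains $K^{\a,\b}_{\z_k,+}(\z_j)=k^{\a\b}_{\z_k}(\z_j)-w(\z_j)\overline{w(\z_k)}k^\a_{\z_k}(\z_j)$. This entry comes from the reproducing property of $K^{\a,\b}_{\z}$ in Lemma \ref{June21-10}.

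On the right-hand side, the first term involves no values of $w$ at all. The second term involves only $w(\z_j)$ and $w(\z_k)$, through the analogous kernel for the character $\overline{\nu_0}\a$.

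So I substitute \eqref{260227_01}, $w(\z_k)=w_k$, into Corollary \ref{250112-02}; this is the only step, and the stated identities follow at once. The identities hold for every character $\a$, since Corollary \ref{250112-02} does, and this is why the corollary speaks of a "family".

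\textbf{The embedding.} I will show that the finite-dimensional isometry defined by the data is the restriction of multiplication by $\overline{g_0}$.

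First, I interpret the identities as an isometry $V_\a$. Its domain is spanned by formal vectors indexed by the $\z_k$, with Gram matrix given by the left-hand side. Its range is spanned by vectors with Gram matrix given by the right-hand side.

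Second, I use Corollary \ref{250112-01} to realize $V_\a$ inside the colligations. It realizes the domain vectors as
$$
\begin{bmatrix}1&w\\ \overline w&1\end{bmatrix}\begin{bmatrix}0\\ k^\a_0\end{bmatrix}\frac{k^\a_{\z_k}(0)}{k^\a_0(0)}\overline{w_k}\oplus K^{\a,\b}_{\z_k}.
$$
These lie in $\{\text{span of the first summand}\}\oplus H^w(\a,\b)$, which is the domain of the unitary colligation \eqref{241225-01}. The corresponding range vectors lie in $H^w(\overline{\nu_0}\a,\b)\oplus\{\cdot\}$, which is the range of \eqref{241225-01}.

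The Gram matrices of these realized vectors are exactly the two sides of the identities. This is precisely the content of taking norms in Corollary \ref{250112-01}, i.e.\ Corollary \ref{250112-02}. Hence $V_\a$ is well defined, and the map $\sum (\cdot)x_k\mapsto\sum(\cdot)x_k$ sending the formal vectors to these function vectors is isometric on both the domain and the range side.

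Since multiplication by $\overline{g_0}$ sends each domain vector to its paired range vector by Corollary \ref{250112-01}, $V_\a$ is the restriction of the unitary colligation \eqref{241225-01} to this subspace. That is the required isometric embedding.

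\textbf{Main obstacle.} The only delicate point is well-definedness when the Gram matrices are degenerate, i.e.\ when the formal vectors are linearly dependent. This is handled by passing to quotients by null vectors. The equality of the two Gram matrices guarantees that the null spaces correspond under $V_\a$, so $V_\a$ descends to the quotients.
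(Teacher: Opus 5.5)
Your proposal is correct and matches the paper, which gives no separate proof: the identities are Corollary \ref{250112-02} with $w(\z_k)=w_k$ substituted (every term depends on $w$ only through its values at the nodes), and the embedding is read off Corollary \ref{250112-01}. Your quotient-by-null-vectors remark supplies a well-definedness detail the paper leaves implicit. Your realized vectors split as channel part $\oplus\, K^{\a,\b}_{\z_k}$, so the embedding also respects the input/state and state/output decompositions of the colligations.
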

\begin{remark}
We recall that positivity of all forms
$$
\sum\limits_{k,j}
\overline{x_j}
\left(
k^{\alpha\beta}_{\z_k}(\zeta_j)-w_j\overline{w_k} k^\alpha_{\z_k}(\zeta_j)
\right)
x_k \ge 0,
\quad \forall\a\in\G^*
$$
is necessary and sufficient for existence of a
$\b$ automorphic solution of problem \eqref{260227_01} (see \cite{Kup-Yud-1997}).
\end{remark}

\section{Declarations}
The author has no competing interests to declare that are relevant to the content of this article.
Data sharing is not applicable to this article as no new data were created or analyzed in this study.

\bigskip

\bigskip

A. Kheifets, Department of Mathematics and Statistics, University of Massachusetts Lowell, One University Ave.,
Lowell, MA 01854,USA

\emph{E-mail address:} {Alexander\underline{ }Kheifets@uml.edu}

 \end{document}